\documentclass[12pt, reqno]{amsart}
\allowdisplaybreaks[1]
\usepackage{amsmath}
\usepackage{amssymb}
\usepackage{amsfonts}
\usepackage{verbatim}
\usepackage[usenames]{color}
\usepackage{hyperref}
\makeindex

 \newtheorem{theorem}{Theorem}[section]
 
 \newtheorem{lemma}[theorem]{Lemma}
 \newtheorem{proposition}[theorem]{Proposition}

\theoremstyle{definition}
\newtheorem{definition}[theorem]{Definition}
\theoremstyle{remark}

\newtheorem{fact*}{Fact}


\newcommand\dd{\mathrm d}

\newcommand{\R}{\mathbb{R}}

\newcommand{\cc}[1]{\overline{#1}}

\newcommand{\ad}{^\ast}
\newcommand{\inv}{^{-1}}

\newcommand{\til}{\raise.17ex\hbox{$\scriptstyle\mathtt{\sim}$}}

\newcommand\beq{\begin{equation}}

\newcommand\eeq{\end{equation}}

\newcommand\black{\color{black}}

\newcommand{\bbm}{\left[ \begin{smallmatrix}}
\newcommand{\ebm}{\end{smallmatrix} \right]}
\newcommand{\bpm}{\left( \begin{smallmatrix}}
\newcommand{\epm}{\end{smallmatrix} \right)}
\numberwithin{equation}{section}

\newcommand{\tensor}[2]{\text{ }{\begin{smallmatrix} #1 \\ \otimes\\ #2\end{smallmatrix}}\text{  }}

\newlength{\Mheight}
\newlength{\cwidth}

\newcommand{\dfn}[1]{{\bf #1}\index{#1}}
\newcommand{\tir}[1]{\tensor{#1}{I}}
\newcommand{\tidr}[1]{\tensor{#1}{\mathrm{id}}}

\newcommand{\CUHP}[1]{\cc\Pi(#1)}
\newcommand{\UHP}[1]{\Pi(#1)}
\newcommand{\BallB}[1]{\mathrm{Ball}(#1)}
\newcommand{\Free}[2]{\mathrm{Free}(#1, #2)}
\newcommand{\MU}[1]{\mathcal{M}(#1)}

\newcommand{\RHPB}[1]{\mathrm{RHP}(#1)}

\title[Free Cauchy transforms]{Cauchy transforms arising from
homomorphic conditional expectations parametrize Free Pick functions but those arising from conditional expectations do not}
\author{
J. E. Pascoe$^\dagger$
}
\address{Department of Mathematics\\
  Washington University in St. Louis\\
  One Brookings Drive \\
 St. Louis, MO 63130}
\email[J. E. Pascoe]{pascoej@math.wustl.edu}
\thanks{$\dagger$ Partially supported by National Science Foundation Mathematical
Science Postdoctoral Research Fellowship  
DMS 1606260}

\author{
Ryan Tully-Doyle
}
\address{Mathematics Department \\
Hampton University\\
Hampton, Virginia 23668}
\email[R. Tully-Doyle]{ryan.tullydoyle@hamptonu.edu}
\date{\today}

\subjclass[2010]{Primary 46L54, 46L53 Secondary 32A70, 46E22}

\setcounter{tocdepth}{4}


\begin{document}

\begin{abstract}
Nevanlinna showed that Cauchy transforms of probability measures parametrize all functions from the upper half plane into itself satisfying a certain asymptotic condition at infinity. 
We show that the correspondence fails in general for the unbounded case for somewhat trivial reasons; however, we show that in a setting of ``homomorphic'' operator valued free probability that Cauchy transforms of
homomorphic conditional expectations parametrize free Pick functions. 
\end{abstract}
\maketitle

\tableofcontents

\section{Introduction}

Classically, R. Nevanlinna proved the following result.

\begin{theorem}[Nevanlinna \cite{nev22}]
Let $\Pi$ denote the upper half plane.
Let $f: \Pi \rightarrow \mathbb{C}.$
The function $f$ is analytic, maps $\Pi$ to $\cc{\Pi}$
and satisfies
	$$\liminf_{s\rightarrow \infty} sf(sz) = -z^{-1},$$
for all $z\in \Pi,$
if and only if 
there exists a probability measure $\mu$ on $\mathbb{R}$
such that
	$$f(z) = \int_\R \frac{1}{t - z} \dd\mu(t).$$
\end{theorem}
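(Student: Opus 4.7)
The sufficiency is a direct verification. Given a probability measure $\mu$ on $\R$, the Cauchy transform $f(z) = \int_\R (t-z)\inv\, \dd\mu(t)$ is holomorphic off the support of $\mu$; on $\Pi$ one has $\IM f(z) = \IM(z) \int_\R |t-z|^{-2}\, \dd\mu(t) > 0$; and the substitution $sf(sz) = \int_\R (t/s-z)\inv\, \dd\mu(t)$ together with dominated convergence (the integrand is uniformly bounded by $|\IM z|\inv$ and tends pointwise to $-z\inv$) yields the required asymptotic.

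For the converse, my plan is to reduce to the classical Herglotz--Nevanlinna representation, which asserts that every analytic $f:\Pi\to\cc{\Pi}$ admits a unique decomposition
\begin{equation*}
f(z) = a + bz + \int_\R \left(\frac{1}{t-z} - \frac{t}{1+t^2}\right)\dd\sigma(t),
\end{equation*}
for some $a\in\R$, $b\ge 0$, and positive Borel measure $\sigma$ on $\R$ with $\int \dd\sigma(t)/(1+t^2) < \infty$. The standard proof of the representation pulls $f$ back to the disk $\D$ by a Cayley transform, applies the Herglotz representation of functions with nonnegative real part on $\D$, and transfers the resulting boundary measure on $\T$ back to $\R$, with the linear term $bz$ accounting for a potential atom at the point at infinity.

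The remaining task is to use the asymptotic hypothesis to force the triple into the specific form $(a,b,\sigma) = (\int t\,\dd\sigma/(1+t^2),\,0,\,\mu)$ with $\mu$ a probability measure. Substituting the representation into $sf(sz)$ and using the algebraic identity $(t-z)\inv - t/(1+t^2) = (1+tz)/[(t-z)(1+t^2)]$, one expands asymptotically against the finite measure $\dd\sigma/(1+t^2)$; dominated convergence applied to the uniformly bounded kernel $s/(t-sz)$ identifies the leading orders. Matching against $-z\inv$ forces $b=0$ (killing the $s^2$-growth), $\sigma(\R)=1$ (from the coefficient of $-1/z$), and pins $a$ to the claimed shift, whereupon the representation collapses to the desired Cauchy transform. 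The main obstacle will be this asymptotic analysis: the Herglotz--Nevanlinna kernel cannot be split linearly into $1/(t-z)$ and $-t/(1+t^2)$ pieces until $\sigma$ is known to be finite, so one must first exploit the hypothesis to rule out heavy tails of $\sigma$ before the Cauchy-transform form can be extracted.
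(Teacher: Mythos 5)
The paper does not actually prove this theorem: it is quoted as Nevanlinna's classical result (with a citation to \cite{nev22}) and serves only as motivation, so there is no in-paper proof to compare your argument against. That said, your argument is correct and is the standard one. The sufficiency computation (positivity of $\IM f$ via $\IM f(z)=\IM(z)\int|t-z|^{-2}\,\dd\mu$, and dominated convergence for $sf(sz)=\int(t/s-z)\inv\,\dd\mu$) is fine. For necessity, your plan --- invoke the Herglotz--Nevanlinna representation and then use the asymptotics to kill $b$, normalize $\sigma$, and absorb $a$ --- works, and you correctly isolate the one genuine subtlety: the kernel cannot be split until $\sigma$ is known finite. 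Concretely, $s\,\IM f(is)=bs^{2}+\int s^{2}/(s^{2}+t^{2})\,\dd\sigma(t)$ is monotone increasing in $s$, so a finite $\liminf$ (which also upgrades any subsequential reading of the hypothesis to a genuine limit) forces $b=0$ and $\sigma(\R)=1$ by monotone convergence; only then may one split off the constant $\int t\,\dd\sigma/(1+t^{2})$ and use dominated convergence on $\int s/(t-sz)\,\dd\sigma$ to conclude $a$ equals that constant. It is worth observing that your route is precisely the scalar prototype of what the paper does in Section 2 for its noncommutative main theorem: a Cayley transform to a Herglotz representation, a lemma showing the relevant vectors are orthogonal to $\ker(1-L)$ (whose proof is exactly your finiteness step, in the guise of $\int_{\T}\tfrac{2}{|1-\omega|^{2}}\,\dd\mu_{Vw}<\infty$), and a final appeal to the asymptotic condition to delete the constant term.
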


Thus, functions with positive imaginary part satisfying good asymptotics are parametrized
by probability measures on the real line.


The quantity
	$$f(z) = \int_\R \frac{1}{t - z} \dd\mu(t),$$
occurring in Nevanlinna's theorem is often referred to as
the \dfn{Cauchy transform}.
 Recent work by Anshelevich and Williams \cite{anw14, will13, 2015williams}
has explored the connection between distribution and function theory in free probability in terms of the noncommutative Cauchy transform and the related $R$-transform.
The Cauchy transform and the $R$-transform have served as a vibrant part of free probability, which is evidenced by the large amount of recent work on the subject.

We resolve the correspondence between Cauchy transforms and the class of functions on the upper half plane in the noncommutative context of operator-valued free probability and
free analysis.

\subsection{The noncommutative context}

Let $B$ be a $C^*$-algebra.
The \dfn{matrix universe over $B$}, denoted $\MU{B},$ is the set of square matrices over $B,$ that is
	$$\MU{B} = \bigcup^{\infty}_{n=1} M_n(B).$$
Next, the \dfn{upper half plane over $B$,} denoted $\UHP{B},$ is given by
	$$\UHP{B} =  \{X \in \MU{B} | \hspace{2pt} \text{Im } X > 0\}.$$
Here, we say a self-adjoint operator $A > 0$ if its spectrum is contained in the positive reals and $A \geq 0$ if $A$ has spectrum contained in the non-negative reals.
Similarly, the \dfn{closed upper half plane over $B$,} denoted $\CUHP{B},$ is
	$$\CUHP{B} =  \{X \in \MU{B} | \hspace{2pt} \text{Im } X \geq 0\}.$$
For any $\mathcal{D} \subset \MU{B_1},$ a \dfn{free function} 
$f: \mathcal{D} \rightarrow \MU{B_2}$ is graded and respects intertwining maps. That is, $f$ takes an $n \times n$ matrix over
$B_1$ to an $n \times n$ matrix over $B_2$, and 
if
$\Gamma X = Y \Gamma$ for some rectangular matrix $\Gamma$ of scalars, then  
$\Gamma f(X) = f(Y) \Gamma.$
We denote the set of free functions for $\mathcal{D}$ to $\mathcal{R}$
by  $\Free{\mathcal{D}}{\mathcal{R}}.$ (For more elaborate exposition regarding free analysis, see e.g. the comprehensive presentation in \cite{vvw12}.)

In this noncommutative context, a \dfn{free Pick function} is just a free function $f: \UHP{B_1} \rightarrow \CUHP{B_2}.$ 

Given:
\begin{enumerate}
	\item A $C^*$-algebra $B$,
	\item A von Neumann algebra $M$ unitally containing $B,$
	\item An unbounded self-adjoint operator
	 $A$ affiliated to $M,$ that is, an operator so that each of its spectral projections are contained in
$M,$
	\item A noncommutative conditional expectation 
	$E: M \rightarrow B,$  that is, $E$ is a completely positive unital map satisfying $E(b_1mb_2) = b_1E(m)b_2$
for all $b_1, b_2 \in B$ and
$m \in M$,
\end{enumerate}
we define the \dfn{noncommutative Cauchy transform} of $A$
to be the free function
 $f: \UHP{B} \rightarrow \UHP{B}$ given by the equation
$$f(Z) = \tidr{E}\left(\left(\tir{A}- Z\right)^{-1}\right),$$
where $\mathrm{id}$ denotes the identity map on matrices.
We have adopted a \emph{vertical tensor notation} to save space: $\tensor{A}{B}$
represents the same object as $A \otimes B$.

The obvious analogue of Nevanlinna's theorem would be that
any free function  $f: \UHP{B} \rightarrow \UHP{B}$ satisfying
			$$\lim_{
				\begin{matrix}
					s\rightarrow +\infty \\
					s \in \mathbb{R}
				\end{matrix}						
			} sf(sZ) = -Z^{-1}$$
for all $Z \in \UHP{B}$
would be given by a noncommutative Cauchy transform arising from some $M, E$ and $A$ which could be constructed from $f.$

\emph{The obvious analogue of Nevanlinna's theorem is shown to be
false in Subsection \ref{false}, and thus the ability to reconstruct an algebra, a conditional expectation and an unbounded operator from
a free function $f: \UHP{B} \rightarrow \UHP{B}$ is resolved in the negative.}

However, in an expanded ``homomorphic'' notion of conditional expectation, we show that self maps of the noncommutative upper half plane satisfying good asymptotic conditions are parametrized by Cauchy transforms.

\subsection{Main result}

%
\begin{definition}
	Let $B$, $M$ be $C^*$-algebras.
	Let $\hat{B}$ be a unital subalgebra of $M.$
	We define a \dfn{homomorphic conditional expectation}
	to be a completely positive unital map
	$E: M \rightarrow B$
	such that $E|_{\hat{B}}$ is a homomorphism.
\end{definition}

The name homomorphic conditional expectation is justified by the following analogue of Tomiyama's theorem \cite{tom57}.
\begin{proposition}[Homomorphic Tomiyama's theorem]
	If $E: \hat{B} \rightarrow M$ is a homomorphic conditional expectation over $B,$ then
		for all $b_1, b_2 \in \hat{B},$  $$E(b_1mb_2) = E(b_1)E(m)E(b_2).$$
\end{proposition}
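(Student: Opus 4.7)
The plan is to reduce the claim to Choi's theorem on the multiplicative domain of a unital completely positive map. Recall that for any UCP map $\varphi: A \to C$ between $C^*$-algebras, the \emph{multiplicative domain}
\[
\mathcal{M}_\varphi := \{a \in A : \varphi(a^*a) = \varphi(a)^*\varphi(a) \text{ and } \varphi(aa^*) = \varphi(a)\varphi(a)^*\}
\]
is a $C^*$-subalgebra on which $\varphi$ restricts to a $*$-homomorphism, and moreover satisfies the bimodule identity $\varphi(ax) = \varphi(a)\varphi(x)$ and $\varphi(xa) = \varphi(x)\varphi(a)$ for every $a \in \mathcal{M}_\varphi$ and every $x \in A$. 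This is a standard consequence of the Kadison--Schwarz inequality applied to $\varphi$ and to a suitable $2 \times 2$ dilation.

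The first step is to show that $\hat{B} \subset \mathcal{M}_E$. Since $E$ is unital and completely positive, it is automatically $*$-preserving. Combined with the hypothesis that $E|_{\hat{B}}$ is a homomorphism (and taking $\hat{B}$ to be a unital $*$-subalgebra, so that $b^* \in \hat{B}$ whenever $b \in \hat{B}$), this forces $E|_{\hat{B}}$ to be a $*$-homomorphism. Therefore, for each $b \in \hat{B}$,
\[
E(b^*b) = E(b^*) E(b) = E(b)^* E(b),
\]
and symmetrically $E(bb^*) = E(b)E(b)^*$, placing $b$ inside $\mathcal{M}_E$.

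The second step is to invoke the bimodule property of the multiplicative domain twice: for any $b_1, b_2 \in \hat{B} \subset \mathcal{M}_E$ and any $m \in M$,
\[
E(b_1 m b_2) = E(b_1) \, E(m b_2) = E(b_1) \, E(m) \, E(b_2),
\]
which is exactly the asserted identity.

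I anticipate no substantive obstacle, as the Kadison--Schwarz/multiplicative-domain technology is classical and the hypothesis ``$E|_{\hat{B}}$ is a homomorphism'' is precisely what is needed to place $\hat{B}$ inside $\mathcal{M}_E$. The only point requiring mild care is verifying that $\hat{B}$ is $*$-closed so that $b^*b, bb^* \in \hat{B}$; if one wishes to begin with only an algebraic subalgebra, one should first pass to its $*$-closure and then extend the multiplicativity of $E$ to this closure using that $E$ is $*$-preserving.
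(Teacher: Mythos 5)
Your proof is correct, but it reaches the conclusion by a different route than the paper. You black-box the statement into Choi's theorem on the multiplicative domain of a unital completely positive map: since $E$ is positive and hence $*$-preserving, the hypothesis that $E|_{\hat B}$ is a homomorphism places all of $\hat B$ in the multiplicative domain $\mathcal{M}_E$, and the bimodule identity $E(b_1 m b_2)=E(b_1)E(m)E(b_2)$ follows immediately. The paper instead follows Tomiyama's original method: it first passes to weak closures so that $\hat B$ is generated by its projections, reduces to showing $E(em)=E(e)E(m)$ for a projection $e\in\hat B$, and establishes this by an explicit $2\times 2$ positivity computation (conjugating $E\bpm 1 & me \\ em^* & emm^*e\epm$ by $\diag(1,\,1-E(e))$ to conclude $(1-E(e))E(em)=0$). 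The two arguments are cousins --- the paper's $2\times2$ trick is essentially the proof of the multiplicative-domain theorem specialized to projections --- but yours is cleaner and slightly more general in that it does not require passing to von Neumann algebras or reducing to the projection lattice; it works for any $*$-closed $C^*$-subalgebra $\hat B$ directly. The one caveat you correctly flag is shared by both arguments: the paper defines $\hat B$ only as a unital subalgebra, and one genuinely needs $\hat B$ to be $*$-closed (so that $b^*b, bb^*\in\hat B$ and multiplicativity applies to them) for either proof to go through; the paper handles this implicitly by assuming everything is weakly closed.
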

We prove the above proposition in  Section \ref{proofhomtom}

\begin{definition}
	Let $B$, $\hat{B}$ be $C^*$-algebras.
	We define a \dfn{symmetric dilation} to be a 
	completely positive map 
	$\psi: B \rightarrow \hat{B}$
	so that there exists a $*$-homomorphism
	$E: \hat{B} \rightarrow B$
	such that $E \circ \psi$ is the identity.
\end{definition}

Our main result is as follows.

\begin{theorem}\label{mainresulthomcond}
Let $f: \UHP{B} \rightarrow \CUHP{B}$ be a free function.
The following are equivalent
	\begin{enumerate}
		\item For all $Z \in \UHP{B},$
			$$\lim_{
				\begin{matrix}
					s\rightarrow +\infty \\
					s \in \mathbb{R}
				\end{matrix}						
			} sf(sZ) = -Z^{-1}.$$
		\item There exists:
		\begin{enumerate}
			\item A von Neumann algebra $M,$
			\item A unital subalgebra of $\hat{B} \subseteq M,$
			\item An unbounded self-adjoint operator $A$ affiliated to $M,$
			\item A homomorphic conditional expectation
			$E: M \rightarrow B,$
			\item A symmetric dilation
			$\psi: B \rightarrow \hat{B}$
			such that $E \circ \psi$ is the identity,
		\end{enumerate}
		 so that the function $f$ can be written as
			$$f(Z) = \tensor{E}{\mathrm{id}_n}\left[
			\left(\tensor{A}{I_n}-
			\tensor{\psi}{\mathrm{id}_n}(Z)\right)^{-1}\right].
			$$
	\end{enumerate}
\end{theorem}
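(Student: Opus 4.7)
\emph{Easy direction.} I would first verify the three conditions in $(2)\Rightarrow(1)$ by direct computation. Freeness of the formula follows because $\psi\otimes\mathrm{id}_n$ and $E\otimes\mathrm{id}_n$ commute with scalar intertwiners, as does inversion. The mapping $\UHP{B}\to\CUHP{B}$ follows from complete positivity: $\mathrm{Im}\,(\psi\otimes\mathrm{id}_n)(Z)>0$ forces the resolvent to have non-negative imaginary part, which $E\otimes\mathrm{id}_n$ preserves. For the asymptotic,
\begin{equation*}
sf(sZ) = (E\otimes\mathrm{id}_n)\bigl[((A/s)\otimes I_n - (\psi\otimes\mathrm{id}_n)(Z))^{-1}\bigr] \longrightarrow -(E\otimes\mathrm{id}_n)\bigl[(\psi\otimes\mathrm{id}_n)(Z)^{-1}\bigr],
\end{equation*}
and since $\psi(Z)$ sits entrywise in $\hat{B}$ with $E|_{\hat{B}}$ a homomorphism, $E\otimes\mathrm{id}_n$ commutes with inversion on such elements, yielding $-((E\circ\psi)\otimes\mathrm{id}_n)(Z)^{-1} = -Z^{-1}$.

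\emph{Hard direction, reproducing construction.} For $(1)\Rightarrow(2)$ the plan is a Kolmogorov/GNS construction from the free Pick kernel of $f$. Since $f$ is a free Pick function, the kernel
\begin{equation*}
K(Z,W) = \frac{f(Z) - f(W)^{\ast}}{-(Z - W^{\ast})}
\end{equation*}
is completely positive in the free sense, so I would use standard reproducing-kernel machinery to build a Hilbert $B$-module $\mathcal{H}$ with reproducing vectors $k_{Z}v$ satisfying $\langle k_{Z}v, k_{W}w\rangle_{B} = w^{\ast}K(Z,W)v$. I would then define a symmetric operator $A$ on the span of these by the resolvent relation $(A - Z)k_{Z}v = \Omega_{v}$, with the canonical vector $\Omega$ produced by the asymptotic hypothesis as the limit of $s\, k_{siI}v$ as $s\to+\infty$. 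The asymptotic also provides the normalization that lets one show $A$ is essentially self-adjoint; its closure is the desired unbounded self-adjoint operator affiliated to the ambient von Neumann algebra $M$ of bounded operators on $\mathcal{H}$.

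\emph{Building the symmetric dilation.} Next, the action $\pi(b)\colon k_{Z}v\mapsto k_{Z}(bv)$ defines a $\ast$-representation of $B$ on $\mathcal{H}$. I would take $\hat{B} = \pi(B)$, $\psi = \pi$ (viewed as a map $B\to\hat{B}\subseteq M$), and $E(T) = \langle T\Omega,\Omega\rangle_{B}$. The identity $E\circ\psi = \mathrm{id}_{B}$ is then exactly the normalization of $\Omega$, and applying $E$ to the identity $(A - \psi(Z))^{-1}\Omega = k_{Z}$ (lifted to every matrix level via the free structure) recovers the realization formula
\begin{equation*}
f(Z) = (E\otimes\mathrm{id}_{n})\bigl[(A\otimes I_{n} - (\psi\otimes\mathrm{id}_{n})(Z))^{-1}\bigr].
\end{equation*}

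\emph{Main obstacle.} The hard part will be proving that $E|_{\hat{B}}$ is a genuine $\ast$-homomorphism and not merely completely positive. This forces a careful setup in which $\Omega$ plays the role of a vacuum for $\pi$: the Hilbert module must be arranged so that $\pi(B)$ acts on $\Omega$ by left multiplication on a distinguished copy of $B$ sitting inside $\mathcal{H}$, with all of the Cauchy transform dynamics carried by the orthogonal complement. This Stinespring-type structure is precisely what the notion of symmetric dilation encodes, and is exactly the ingredient absent in the naive analogue that fails in Subsection \ref{false}.
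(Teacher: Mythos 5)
Your proposed easy direction is essentially fine (modulo checking that $\psi(Z)^{-1}$ actually lands where $E$ is multiplicative, and justifying the resolvent limit for unbounded $A$), but the hard direction has two serious problems. First, the kernel $K(Z,W) = \frac{f(Z)-f(W)^{*}}{-(Z-W^{*})}$ is not a well-defined object when $Z$ and $W$ are matrices over a $C^*$-algebra: there is no canonical division, and the correct noncommutative substitute is a completely positive kernel satisfying an Agler-type identity. Producing such a model for free functions on $\UHP{B}$ is precisely the hard input the paper imports from the realization theorem of \cite{ppt16} (completed by \cite{BMV16}); it cannot be obtained by ``standard reproducing-kernel machinery.'' Relatedly, your claim that the asymptotic hypothesis yields essential self-adjointness of $A$ is a placeholder for the real analytic step: in the paper this is the lemma that $\operatorname{ran} V$ is orthogonal to $\ker(1-L)$ (the ``mass at infinity''), proved by a monotone convergence argument on Herglotz integrals, after which $A = i(1+L_0)(1-L_0)^{-1}$ is self-adjoint for free as a Cayley transform.

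Second, and fatally, your choice $\psi = \pi$ with $\pi$ a $*$-representation of $B$ proves the wrong theorem. If $\psi$ is a unital $*$-homomorphism with $E\circ\psi = \mathrm{id}$, then $\hat{B} = \pi(B)\cong B$, $E|_{\hat B} = \pi^{-1}$, and $\pi\circ E$ is a completely positive unital projection onto the subalgebra $\hat B$, hence by Tomiyama a genuine conditional expectation. After identifying $B$ with $\hat B$ you would have realized $f$ as a Cauchy transform of a conditional expectation with $\psi$ the identity --- exactly the statement Subsection \ref{false} shows is false (the homogeneous terms of such a transform are generalized polynomials in $Z^{-1}$, and the counterexample's are not). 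The entire content of the theorem is that $\psi$ must be allowed to be a non-multiplicative completely positive map --- in the paper it is the compression $Z\mapsto PZP^{*}$ onto $\ker(1-L)^{\perp}$ --- while the multiplicativity lives in $E$ restricted to the algebra generated by $\operatorname{ran}\psi$. That multiplicativity is not obtained by arranging a vacuum vector; it is extracted from the asymptotic hypothesis itself by evaluating at $Z = I_{k+1}-H$ with $H$ nilpotent upper triangular, which yields $E(\psi(H_1)\cdots\psi(H_k)) = H_1\cdots H_k$ and hence that $E$ is a homomorphism on $\hat B$. Your proposal is missing both this mechanism and the structural asymmetry it depends on.
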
 
We note that Williams showed that the above theorem holds when $E$ is a conditional expectation and $\psi$ is an identity map if we assume additionally that $f$ has some large analytic continuation at infinity corresponding to the classical compactly supported case \cite{will13}. Our result also generalizes previous results in \cite[Section 5]{pastd14}. In the language of this paper, the representations established in the earlier setting held for $B = \mathbb{C}^m.$

We emphatically take the viewpoint that homomorphic conditional expectations are what makes the Nevanlinna theorem work for noncommutative Cauchy transforms-- we leave to the reader whether or not they generate any deeply interesting analogue of operator-valued free probability.
However, we view that our results suggest that either
${\bf(1)}$ free function theory is an incomplete method for understanding free probability or ${\bf(2)}$ that theorems in free probability should extend somewhat trivially to 
``homomorphic'' operator valued  free probability.

\subsection{Failure of the main result in the usual free probabilistic case}\label{false}
We note that we cannot always reduce to the case where
the symmetric dilation
$\psi$ is the identity map and  $E$ is a \emph{bona fide} conditional expectation.

Take $B = \mathbb{C}^2.$ 
Define $\psi(z_1,z_2) = (z_1,z_2,\frac{1}{2}(z_1+z_2)).$
Define $E(w_1,w_2,w_3) = (w_1,w_2).$
So, we have that $\hat{B} = \mathbb{C}^3.$

Now define $A$ acting on $\mathbb{C}^3$ to satisfy
$A (w_1,w_2,w_3) = A(w_1, w_3, w_2).$

Consider 
$$f(Z) = \tensor{E}{\mathrm{id}_n}\left[
			\left(\tensor{A}{I_n}-
			\tensor{\psi}{\mathrm{id}_n}(Z)\right)^{-1}\right].
			$$

One can show that for $(z_1, z_2)\in \mathbb{C}^2$,
$$f(z_1, z_2) = (-z_1^{-1}, -z_2^{-1}(1-2(z_1+z_2)^{-1}z_2^{-1})).$$ Now we observe that 
$$f(z_1, z_2) = (-z_1^{-1}, 0) + \sum_k (0, -z_2^{-1}[2(z_1+z_2)z_2]^{-k}).$$

If we could choose $\tilde{E}$ a conditional expectation and $\tilde{\psi}$ to be the identity, the homogeneous terms in the above expansion would be polynomials in $z_1^{-1}$ and $z_2^{-1}$ but, evidently, they are not.

\section{Proof of the main result}
We now prove our main theorem, Theorem \ref{mainresulthomcond}.

The \dfn{ball over $B$,} denoted $\BallB{B},$ is the set of contractive matrices over $B,$ that is,
	$$\BallB{B} =  \{X \in \MU{B} | \hspace{2pt} \|X\|<1\}.$$
Similarly, the \dfn{right half plane over $B$,} denoted $\RHPB{B},$ is
	$$\RHPB{B} =  \{X \in \MU{B} | \hspace{2pt} \text{Re } X \geq 0\}.$$

In \cite{ppt16}, the following was proved.
\begin{theorem}[\cite{ppt16}] \label{finalcor}
Let $h: \BallB{B_1} \rightarrow \RHPB{B_2}.$
Then there exists:
\begin{enumerate}
	\item A $C^*$-algebra $M$ unitally containing $B_1,$
	\item	A completely positive linear (not necessarily unital) map $R: M \rightarrow B_2,$ 
	\item A unitary $U \in M,$
	\item A bounded self-adjoint operator $T,$
\end{enumerate}
such that 
\beq \label{herglotzfinal} 
h(X) =\tensor{ iT}{I_n} + \tensor{R}{\mathrm{id}_n}
\left[\left( I + \tensor{U}{I_n}X\right)\left(I - \tensor{U}{I_n}X\right)^{-1}\right].
\eeq
\end{theorem}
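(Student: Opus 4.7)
The plan is to reduce the Herglotz representation for $h\colon\BallB{B_1}\to\RHPB{B_2}$ to a Schur/Agler realization theorem for contractive free functions on the free ball via a Cayley transform, and then reassemble the data into the claimed formula. The overall flow mirrors the classical operator-theoretic proof of Herglotz's theorem, in which one passes to the disc and invokes a Schur-class realization.

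First I would apply the free Cayley transform $\mathcal{C}(W)=(W-I)(W+I)^{-1}$. Since $h$ is a free function taking values in $\RHPB{B_2}$, the composition $g=\mathcal{C}\circ h$ is a free function $\BallB{B_1}\to\cc{\BallB{B_2}}$; the contractive bound on $g$ is the standard translation of $\re h\ge 0$ into the disc picture, and because $\mathcal{C}$ respects the grading and intertwining conditions, $g$ remains a free function. Next I would invoke a noncommutative Schur/Agler realization theorem for contractive free functions on the free ball, producing a contractive colligation
$V=\begin{pmatrix}A&B\\ C&D\end{pmatrix}$ on an auxiliary space $\mathcal{H}\oplus B_2$ such that
$$g(X)=\tensor{D}{I_n}+\tensor{C}{I_n}X\bigl(I-\tensor{A}{I_n}X\bigr)^{-1}\tensor{B}{I_n}.$$
A Sz.-Nagy type dilation of $A$ then produces a unitary $U$ in a larger $C^\ast$-algebra $M$ unitally containing $B_1$, with $B,C$ becoming compressions of operators on the dilation space.

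Finally I would invert the Cayley transform using $h=(I+g)(I-g)^{-1}$. Expanding and combining with the transfer-function form for $g$, block-matrix manipulation together with a geometric-series rearrangement yield
$$h(X)=\tensor{iT}{I_n}+\tensor{R}{\mathrm{id}_n}\bigl[(I+\tensor{U}{I_n}X)(I-\tensor{U}{I_n}X)^{-1}\bigr],$$
where $iT$ absorbs the skew-self-adjoint part of $h(0)$ and $R\colon M\to B_2$ is the completely positive map obtained by compression of a $\ast$-representation against the colligation vectors; complete positivity is automatic from this construction.

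The main obstacle I anticipate is arranging all of the data to sit in a single $C^\ast$-algebra $M$ that unitally contains $B_1$, with $U\in M$ genuinely unitary and $R$ a bona fide CP map compatible with the vertical tensor-product action on matrices over $B_1$ demanded by the statement. A secondary technical point is the boundary case where $g$ maps into the closed ball: there one approximates $g$ by $rg$ for $r<1$, produces realizations uniformly in $r$, and passes to a weak-$\ast$ limit of the colligations using compactness of the unit ball of a suitable multiplier algebra.
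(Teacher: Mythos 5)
You should first note that the paper does not prove this statement at all: Theorem \ref{finalcor} is imported verbatim from \cite{ppt16}, and the only ``proof'' content in the present paper is the remark that the exactness hypothesis on $B_1$ appearing in \cite[Corollary 3.6]{ppt16} can be removed using the Ball--Marx--Vinnikov realization theorem \cite[Corollary 3.2]{BMV16} via \cite[Lemma 3.3]{ppt16}. That said, your outline is essentially the strategy of the cited source: Cayley transform $\RHPB{B_2}\to\cc{\BallB{B_2}}$, a Schur--Agler type realization of the resulting contractive free function, dilation to a unitary colligation, and inversion of the Cayley transform to land on the Herglotz form \eqref{herglotzfinal} with $iT$ absorbing the skew part of $h(0)$ and $R$ a compression-type completely positive map. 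So the architecture is right.

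The one place where your proposal materially understates the difficulty is the step ``invoke a noncommutative Schur/Agler realization theorem for contractive free functions on the free ball.'' The domain here is $\BallB{B_1}$, the contractive matrices over an \emph{arbitrary} $C^*$-algebra $B_1$, not the row ball of a tuple of scalar matrices, and the existence of a contractive colligation realization in this generality is precisely the hard input: it is not an off-the-shelf classical fact, the version available when \cite{ppt16} was written required $B_1$ exact, and only \cite{BMV16} supplies it unconditionally. Relatedly, your phrase ``complete positivity is automatic from this construction'' and your final paragraph correctly identify, but do not resolve, the second nontrivial point: one must arrange the model Hilbert space to carry a unital representation of $B_1$ so that $M$ can be taken to unitally contain $B_1$ with $U\in M$ and with $R$ compatible with the $\tensor{\,\cdot\,}{\mathrm{id}_n}$ action; this is the content of \cite[Lemma 3.3]{ppt16} and is where the work actually lives. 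If you intend this as a self-contained proof rather than a reduction to the literature, those two steps need to be supplied; as a reduction, it matches the route the paper relies on.
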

We note that although the statement in \cite[Corollary 3.6]{ppt16} assumes an exactness hypothesis on $B_1$, recent advances in Agler model theory by Ball, Marx and Vinnikov in the preprint \cite[Corollary 3.2]{BMV16} give the full result by \cite[Lemma 3.3]{ppt16}.

We use Theorem \ref{finalcor} to show the following Nevanlinna representation via a Hilbert space geometric derivation.
\begin{theorem}
Let $f: \UHP{B_1} \rightarrow \CUHP{B_2}$ be a free function.
The following are equivalent
	\begin{enumerate}
		\item
			$$\liminf_{
				\begin{matrix}
					s\rightarrow +\infty \\
					s \in \mathbb{R}
				\end{matrix}						
			} |isf(is)| < \infty.$$
		\item There exists:
		\begin{enumerate}
			\item A von Neumann algebra $M,$
			\item An unbounded self-adjoint operator $A$ affiliated to $M,$
			\item A completely positive unital map 
			$\psi: B_1 \rightarrow M,$
			\item A completely positive map 
			$R: M \rightarrow B_2,$
		\end{enumerate}
		 so that the function $f$ can be written as
			$$f(Z) = \tensor{R}{\mathrm{id}_n}\left[
			\left(\tensor{A}{I_n}-
			\tensor{\psi}{\mathrm{id}_n}(Z)\right)^{-1}\right].
			$$
	\end{enumerate}
\end{theorem}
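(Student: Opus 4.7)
The implication $(2) \Rightarrow (1)$ follows by substituting $Z = isI$ into the Cauchy representation and applying the spectral theorem for the self-adjoint $A$ together with dominated convergence, yielding $is\,f(isI) \to -\tensor{R(I)}{I_n}$ in norm.

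For $(1) \Rightarrow (2)$ my plan is to reduce to Theorem~\ref{finalcor} by means of the Cayley transform. Define
$$
g(W) = -i f\bigl(i(I + W)(I - W)^{-1}\bigr), \qquad W \in \BallB{B_1},
$$
so that $g : \BallB{B_1} \to \RHPB{B_2}$ is a free function (the factor $-i$ rotates $\CUHP{B_2}$ onto the closed right half-plane). Applying Theorem~\ref{finalcor} to $g$ produces a $C^*$-algebra $M$ unitally containing $B_1$, a unitary $U \in M$, a bounded self-adjoint $T \in B_2$, and a completely positive $R : M \to B_2$ satisfying the Herglotz representation. Take $\psi : B_1 \hookrightarrow M$ to be the unital inclusion (automatically a CP unital map), and set $\tilde Z := \tensor{\psi}{\mathrm{id}_n}(Z)$ and $\hat U := \tensor{U}{I_n}$. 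Inverting the Cayley through $f(Z) = i\,g\bigl((\tilde Z - iI)(\tilde Z + iI)^{-1}\bigr)$ and using the identity $(I + X)(I - X)^{-1} = -I + 2(I - X)^{-1}$ yields
$$
f(Z) = -\tensor{T}{I_n} - i\tensor{R(I)}{I_n} + 2i\,\tensor{R}{\mathrm{id}_n}\bigl[(I - \hat U W)^{-1}\bigr],
$$
where $W = (\tilde Z - iI)(\tilde Z + iI)^{-1}$. I would then introduce the (possibly unbounded) self-adjoint operator $A := i(I + U)(I - U)^{-1}$ affiliated with $M$, defined via spectral calculus on the orthogonal complement of $\ker(I - U)$. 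A direct manipulation shows $(I - \hat U W)^{-1} = (\tilde Z + iI)\bigl[(I - \hat U)\tilde Z + i(I + \hat U)\bigr]^{-1}$, and the commutation of $A$ with $\hat U$ factors the bracket as $(I - \hat U)(\tilde Z + \tensor{A}{I_n})$. This converts the formula for $f(Z)$ into a candidate Cauchy representation involving $(\tensor{A}{I_n} - \tilde Z)^{-1}$, modulo the additive term $-T - iR(I)$ and correction terms supported on $\ker(I - U)$ and on right-multiplication by $(I - \hat U)$.

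The final and principal step is using the hypothesis $\liminf_{s \to \infty}|is\,f(is)| < \infty$ to eliminate these corrections. Evaluating along $Z = isI$: any spectral mass of $U$ at $\zeta = 1$ would produce a term quadratic in $s$ after $R$ is applied (the Cayley sends $\zeta = 1$ to ``$\lambda = \infty$''), while the constants $T$, $R(I)$, and $R[(I - \hat U)^{-1}]$ together contribute terms linear in $s$. The growth hypothesis therefore forces each of the offending coefficients to vanish, in direct parallel to the classical Nevanlinna argument that $\liminf |is\,f(is)| < \infty$ excludes both the linear term and the singular support at infinity from the Herglotz measure. In operator-theoretic terms, $R$ is forced to annihilate the spectral projection of $U$ at $\{1\}$ (so $R(Pm) = R(mP) = 0$ by the CP-Schwarz inequality), and the remaining bounded corrections are absorbed into a redefined completely positive $R' : M \to B_2$, yielding the desired representation $f(Z) = \tensor{R'}{\mathrm{id}_n}\bigl[(\tensor{A}{I_n} - \tilde Z)^{-1}\bigr]$. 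The main technical obstacle is orchestrating these manipulations while $\hat U$ and $\tilde Z$ fail to commute, $A$ is unbounded, and complete positivity of $R'$ must be preserved throughout the modifications.
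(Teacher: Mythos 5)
Your proposal follows essentially the same route as the paper: Cayley-transform $f$ to a Herglotz function on the ball, invoke Theorem~\ref{finalcor}, define the unbounded self-adjoint $A$ as the Cayley transform of the unitary on the complement of $\ker(I-U)$, and use the growth hypothesis both to show the completely positive map annihilates the spectral mass of $U$ at $1$ (the paper's lemma that $\ran V \perp \ker(1-L)$, proved by monotone convergence against the Herglotz measure) and to kill the residual affine terms. The approach is correct and matches the paper's, including the need to compress $Z$ to $\ker(I-U)^{\perp}$ (the paper's $\psi$) and to absorb the conjugating factors $(I-L)^{-1}$ into a new completely positive $R$.
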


\begin{proof}
We adopt the technique used in the proof of a general Nevanlinna types theorem as in \cite{aty13,pastd14}.

Let $f$ be as in the statement of the Theorem. By concretely realizing Theorem \ref{finalcor}, we can instantiate a Herglotz function $h$ which satisfies
$ih((Z + i)\inv (Z - i))=f(Z) - T$ for some self-adjoint $T$. By Theorem \ref{finalcor}, $h$ can be written concretely as
  $$h(\Lambda) = \tir{V\ad} \left(\tensor{L}{I} - \Lambda\right)\inv \left(\tensor{L}{I} + \Lambda\right) \tir{V}.$$
(Here we have concretely written $R(x) = V\ad x V$ and used a resolvent of the form $(L - X)^{-1}(L+X)$ instead of $(1-UX)^{-1}(1+UX)$ to agree with \cite{aty13, ag90}. However, $L$ is still a unitary. If fact, the algebra will show that $L = U\ad.$)

Let 
	$$f(Z)-\tir{T} = i\tir{V\ad}\left(\tensor{L}{I} - {(Z + i)\inv (Z - i)}\right)\inv \left(\tensor{L}{I} + {(Z + i)\inv (Z - i)}\right)\tir{V}.$$
	
One can show as an elementary exercise in the spectral theorem that every vector of the form $Vw$ is in the domain of the normal inverse
$(1-L)^{-1}.$ Notably, this reduces to an exercise in measure theory and manipulation of classical Herglotz integrals.
\begin{lemma}
Any vector of the form $Vw$ is in the domain of the normal inverse of $(1-L)^{-1}.$ Namely, the range of $V$
is perpendicular to the kernel of $1-L.$
\end{lemma}
\begin{proof}
Consider our function
		$$f(Z)-\tir{T} = i\tir{V\ad}\left(\tensor{L}{I} - {(Z + i)\inv (Z - i)}\right)\inv \left(\tensor{L}{I} + {(Z + i)\inv (Z - i)}\right)\tir{V}.$$
	Evaluate at $Z = is.$
	$$f(is)-T = iV\ad\left(L - {(is + i)\inv (is - i)}\right)\inv \left( L+ {(is + i)\inv (is - i)}\right)V.$$
	
	So, since $L$ is unitary and thus normal, evaluating $w^*(f(is)-T)w$ gives, via the the spectral theorem,
	\begin{align*}
	w^*(f(is)-T)w& = iw^*V\ad\left(L - {(is + i)\inv (is - i)}\right)\inv \left( L+ {(is + i)\inv (is - i)}\right)Vw
	\\&= i\int_{\mathbb{T}} 
		\frac{\omega + {(is + i)\inv (is - i)}}
		{\omega - {(is + i)\inv (is - i)}}
	 d\mu_{Vw}(\omega)
	\\& = i\int_{\mathbb{T}} 
		\frac{\omega(s+1) + (s - 1)}
		{\omega(s+1) - (s-1)}
	d \mu_{Vw}(\omega).
\end{align*}		
	
Note that the condition 
$$\liminf_{
				\begin{matrix}
					s\rightarrow +\infty \\
					s \in \mathbb{R}
				\end{matrix}						
			} |isf(is)| < \infty.$$
implies \emph{a fortiori} that
$$\liminf_{
				\begin{matrix}
					s\rightarrow +\infty \\
					s \in \mathbb{R}
				\end{matrix}						
			}  s\text{Im }f(is) < \infty.$$
			
So, consider		
	\begin{align*}s\text{Im }w^*f(is)w & =  s\text{Im }w^*(f(is)-T)w
 \\&=  s\text{Im }i\int_{\mathbb{T}} 
		\frac{\omega(s+1) + (s - 1)}
		{\omega(s+1) - (s-1)}
	 d\mu_{Vw}(\omega)
	   \\&= \int_{\mathbb{T}} 
		\frac{s^2}
		{s^2 + 1 - (s^2 - 1)\text{Re }\omega}
	 d\mu_{Vw}(\omega)
	.\end{align*}
As $s$ goes to infinity,
noting that the integrand is monotone increasing in $s$, by monotone convergence theorem 
		$$\int_{\mathbb{T}} 
		\frac{1}
		{1 - \text{Re }\omega}
	 d\mu_{Vw}(\omega) 
	  = \liminf_{s\to\infty} s\text{Im }w^*f(is)w < \infty.$$
	  
Since
$$\int_{\mathbb{T}} 
		\frac{1}
		{1 - \text{Re }\omega}
	 d\mu_{Vw}(\omega)  = \int_{\mathbb{T}} 
		\frac{2}
		{|1 - \omega|^2}
	d \mu_{Vw}(\omega),$$
we are done, because $Vw$ is the domain of $f(L)$
if and only if $|f|^2$ is integrable with respect to $d\mu_{Vw}.$
\end{proof}

Straightforward algebra gives
	\begin{align*}
		f(Z)-\tir{T} &= i\tir{V\ad}\left(\tensor{L}{I} - {(Z + i)\inv (Z - i)}\right)\inv \left(\tensor{L}{I} + {(Z + i)\inv (Z - i)}\right)\tir{V} \\
		&= i\tir{V\ad}\left({(Z+i)} \tensor{L}{I} - {(Z - i)} \right)\inv \left({(Z + i)} \tensor{L}{I} + {(Z - i)}\right)\tir{V} \\
		&= i\tir{V\ad}\left({Z} \tensor{L - I}{I} + i\tensor{L+I}{I}\right)\inv \left( {Z} \tensor{L+I}{I} - i \tensor{L - I}{I} \right)\tir{V}.
	\end{align*}

Decompose $L$ into blocks acting on $\ker 1 - L$ and $\ker(1-L)^\perp$ as $$L = \bbm 1 & 0 \\ 0 & L_0 \ebm$$ so that $\ker 1 - L_0$ is trivial. Multiply through on the left by $I = \bbm 1 & 0 \\ 0 & (1- L_0)\inv \ebm \bbm 1 & 0 \\ 0 & 1- L_0 \ebm$.
We get
\begin{align*}
 &i\tir{V\ad}
 \tir{\bbm 1 & 0 \\ 0 & (1- L_0)\inv \ebm}
 \tir{\bbm 1 & 0 \\ 0 & 1 - L_0 \ebm}
 \left(Z\tir{ (L- I)} +\tir{ i(L +I)}\right)\inv \left(Z\tir{(L+I)} -\tir{ i (L - i)}\right)
 \tir{V} \\
 &=i\tir{V\ad} \tir{\bbm 1 & 0 \\ 0 & (1- L_0)\inv \ebm}
  \left(Z \tir{(L - I)}\tir{ \bbm 1 & 0 \\ 0 & (1 - L_0)\inv \ebm} + \tir{i (L + I)}\tir{\bbm 1 & 0 \\ 0 & (1 - L_0)\inv \ebm}\right)\inv \\ &\hspace{1in} \times \left(Z\tir{(L + I)} -\tir{ i (L - i)}\right)\tir{V}\\
 &=i\tir{V\ad} 
 \tir{\bbm 1 & 0 \\ 0 & (1- L_0)\inv\ebm}
  \left(Z \tir{\bbm 0 & 0 \\ 0 & -1 \ebm} + \tir{i \bbm 2 & 0 \\ 0 & (1 + L_0) (1 - L_0)\inv \ebm}\right)\inv  \left(Z\tir{(L+I)} -\tir{ i (L - I)}\right) \tir{V}\\
&= i\tir{V\ad}\tir{\bbm 1 & 0 \\ 0 & (1- L_0)\inv \ebm}
 \left(Z \tir{\bbm 0 & 0 \\ 0 & -1 \ebm }+ \tir{i \bbm 2 & 0 \\ 0 & (1 + L_0) (1 - L_0)\inv \ebm}\right)\inv \\ &\hspace{1in} \times \left(Z\tir{\bbm 2 & 0 \\ 0 & L_0 + 1\ebm} - \tir{ i \bbm 0 & 0 \\ 0 & L_0 - I \ebm}\right)\tir{V}
\end{align*}

The operator $$A = i \frac{1 + L_0}{1 - L_0}$$ is a densely defined self-adjoint unbounded operator since $L_0$ has no kernel.

Since we are only interested in $V\ad M(Z) V$, the upper triangular form of
$$\left(Z \tir{\bbm 0 & 0 \\ 0 & -1 \ebm} + \tir{i \bbm 2 & 0 \\ 0 & (1 + L_0) (1 - L_0)\inv \ebm}\right)\inv $$ and the structure of $V$, namely that $V$ is perpendicular to the kernel of $1-L$ , gives that the relevant operator is the (2,2) block. Then compress $Z$ to  $$\tidr{\psi}(Z) = Z_\psi=\tir{P}Z\tir{P^*}$$
where $P$ is the projection onto the perp of the kernel of 
$I-L$.
Then our resolvent has the form


\begin{align*}
&f(Z) - \tir{T} = \tir{V\ad (I - L_0)\inv}
 \left(\tir{A} - Z_\psi\right)\inv
 \left(iZ_\psi\tir{(L_0 + I)}  + \tir{(L_0 - I)}\right)\tir{V} \\
&=\tir{V\ad (I - L_0)\inv} 
\left(\tir{A} - Z_\psi\right)\inv
\left(iZ_\psi\tir{(L_0 + I)(I - L_0)\inv}  + I \right)\tir{(I - L_0) V}\\
&= \tir{V\ad (I- L_0)\inv} \left(\tir{A} - Z_\psi\right)
\inv \left(Z_\psi \tir{A} + I\right) \tir{(I - L_0) V}\\
&=\tir{V\ad  (I - L_0)\inv} \left(\tir{A} - Z_\psi\right)\inv \left(Z_\psi \tir{A} - \tir{A^2} + \tir{A^2} + I\right)\tir{ (I - L_0) V} \\
&= \tir{V\ad (I - L_0)\inv} \left(\tir{A} - Z_\psi\right)\inv 
\left[\left(Z_\psi - \tir{A})\tir{A} + (\tir{A^2} + I\right)\right] \tir{(I - L_0) V}\\
&= \tir{V\ad (I - L_0)\inv A (I - L_0)V} + \tir{V(I - L_0)\inv} \left(\tir{A} - Z_\psi\right)\inv \left(\tir{A^2} + I\right) \tir{(I - L_0)V} \\
&= \tir{V\ad AV} + \tir{V\ad (I - L_0)\inv }\left(\tir{A} - Z_\psi\right)\inv \left(\tir{A^2} + I\right) \tir{(I - L_0)V}\\
&= \tir{V\ad AV} +\tir{V\ad (I - L_0)\inv }\left(\tir{A} - Z_\psi\right)\inv  \tir{(I - L_0\ad)^{-1}V}.
\end{align*}

Now, the asymptotic condition implies that the constant terms must vanish, so
$$f(Z) = \tir{V\ad (I - L)\inv }\left(\tir{A} - Z_\psi\right)\inv  \tir{(I - L\ad)^{-1}V}.$$

Defining a new $R(x) = V\ad (I - L)\inv x (I - L\ad)\inv V  $ and $\psi$ to be as above,
we are done.


\black

\end{proof}

The main result Theorem \ref{mainresulthomcond} now follows by noting that
	$$E(-\psi(Z)^{-1}) = \lim_{
				\begin{matrix}
					s\rightarrow +\infty \\
					s \in \mathbb{R}
				\end{matrix}						
			} sf(sZ) = -Z^{-1}.$$

So we see that 
	$$E(\psi(Z)^{-1}) = Z^{-1}.$$

One can show that
	$$E(\psi(H_1)\ldots\psi(H_k)) = H_1 \ldots H_k,$$
by taking $Z = I_{k+1} - H,$ where $H$ has $H_1, \ldots, H_k$
on the upper diagonal.
\begin{lemma}
		$$E(\psi(H_1)\ldots\psi(H_k)) = H_1 \ldots H_k.$$
\end{lemma}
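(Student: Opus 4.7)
The plan is to derive the lemma from the identity $E(\psi(Z)^{-1}) = Z^{-1}$ that was just established (valid at every matrix level of $\UHP{B}$ via the asymptotic $\lim_{s \to \infty} s f(sZ) = -Z^{-1}$), by specializing $Z$ to a nilpotent perturbation of a scalar matrix, as the remark immediately preceding the lemma suggests.

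Concretely, I would assemble $H \in M_{k+1}(B)$, the strictly upper-triangular matrix with $H_j$ in the $(j, j+1)$ entry and zeros elsewhere, so that $H$ is nilpotent with $H^{k+1} = 0$ and the only power contributing a nonzero $(1, k+1)$ entry is $H^k$, whose $(1, k+1)$ entry is precisely $H_1 H_2 \cdots H_k$. Because $I_{k+1} - H$ need not belong to $\UHP{B}$, I would substitute $Z(s) := is I_{k+1} - H$ for real $s$; boundedness of each $H_j$ forces $\mathrm{Im}\, Z(s) > 0$ once $s > \tfrac{1}{2}\|H - H^\ast\|$, placing $Z(s)$ at level $k+1$ of $\UHP{B}$. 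Unitality and linearity of $\psi$ yield $\psi(Z(s)) = is I - \psi(H)$, and $\psi(H)$ is likewise strictly upper triangular with $\psi(H)^{k+1} = 0$. Therefore both $Z(s)^{-1}$ and $\psi(Z(s))^{-1}$ are finite Neumann series in the scalar $1/(is)$, namely
$$Z(s)^{-1} = \sum_{j=0}^k (is)^{-j-1} H^j, \qquad \psi(Z(s))^{-1} = \sum_{j=0}^k (is)^{-j-1} \psi(H)^j,$$
with no convergence issues.

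The last step is coefficient comparison. Applying $E$ entrywise and invoking $E(\psi(Z(s))^{-1}) = Z(s)^{-1}$ produces an identity of polynomials in $1/(is)$ whose coefficients are matrices in $M_{k+1}(B)$; since the identity holds on an infinite set of $s$, the coefficients must agree individually. Reading the $(1, k+1)$ entry of the coefficient of $(is)^{-k-1}$ yields exactly
$$E(\psi(H_1)\psi(H_2)\cdots\psi(H_k)) = H_1 H_2 \cdots H_k.$$
The only delicate point is arranging $Z(s) \in \UHP{B}$ so that the preceding identity is legitimately available; the imaginary shift $isI_{k+1}$ accomplishes this without disturbing the nilpotent structure on which the coefficient matching depends, and everything beyond that is routine.
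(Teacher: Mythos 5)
Your proof is correct and follows essentially the same route as the paper: specialize the identity $E(\psi(Z)^{-1})=Z^{-1}$ at a nilpotent strictly upper-triangular $H$, expand the resolvent as a (finite) Neumann series, deduce $\tidr{E}\bigl(\bigl[\tidr{\psi}(H)\bigr]^{k}\bigr)=H^{k}$, and read off the $(1,k+1)$ block entry. Your shift $Z(s)=isI-H$ with coefficient comparison in $1/(is)$ is in fact slightly more careful than the paper's choice $Z=I-H$, which does not lie in $\UHP{B}$ as written, so this is a welcome refinement rather than a deviation.
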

\begin{proof}
	
	Note
	$$(I - H)^{-1} = \sum^{\infty}_{i=0} H^i,$$
	and
	$$\tidr{E}\left(\tidr{\psi}(I - H)^{-1}\right)
	= \sum^{\infty}_{i=0} \tidr{E}\left(\left[\tidr{\psi}(H)\right]^i\right).$$
	
	So we obtain that
		$\tidr{E}\left(\left[\tidr{\psi}(H)\right]^k\right) = H^k.$
	Evaluating at
			$$H = \bpm 0 & H_1 \\
			& \ddots & \ddots &\\
			& & 0 & H_k \\
			& &  & 0 \epm$$
	and looking at the block $(1, k+1)$ entry
	gives the claim.
\end{proof}
Now, we obtain the necessary homomorphic properties by
letting $\hat{B}$ be the algebra generated
the range of $\psi, $ so  we are done.

 \section{Proof of the homomorphic Tomiyama's theorem}
 \label{proofhomtom}
 We now prove our analog of Tomiyama's theorem for homomorphic conditional expectations.
\begin{proof}
Our proof follows Tomiyama's original method in \cite{tom57}.

Suppose $E$ is a homomorphic conditional expectation.
Without loss of generality, assume all $C^*$-algebras involved are weakly closed. (That is, we can extend everything with the Stinespring theorem.) It is sufficient to show that for any projection $e$ in $\hat{B}$
we have that $E(em) = E(e)E(m).$

Let $e$ be a projection in $\hat{B}.$ Let $x$ be a
positive element of $M.$
Note that
$$E(exe) \leq E(e\|x\| e) = \|x\|E(e).$$
So, since $E(e)$ is a projection by the homomorphic property, $$E(exe) = E(e)E(exe)E(e).$$

Now with a general element $m \in M,$ 
	$$ 0 \leq \bpm 1 & 0 \\ 0 & 1- E(e)\epm E\bpm 1 & me
	\\ em^* & emm^*e \epm \bpm 1 & 0 \\ 0 & 1- E(e)\epm 
	= \bpm 1 & E(m^*e)(1-E(e)) \\ (1-E(e))E(em) & 0\epm,  $$
and so $$(1-E(e))E(em) = 0.$$

Thus, $E(em) = E(e)E(em) = 
E(e)E(em) + E(e)E((1-e)m)= E(e)E(m)$
and we are done.
\end{proof}

\bibliography{references}
\bibliographystyle{plain}

\end{document}